\newtheorem{theorem}{Theorem}
\newtheorem{lemma}{Lemma}
\newtheorem{proposition}{Proposition}
\theoremstyle{remark}
\newtheorem{remark}{Remark}
\newcommand{\Z}{\mathbb{Z}}
\newcommand{\C}{\mathbb{C}}
\newcommand{\R}{\mathbb{R}}
\newcommand{\Q}{\mathbb{Q}}
\newcommand{\B}{\mathcal{B}}
\newcommand{\V}{\mathcal{V}}
\newcommand{\PP}{\mathcal{P}}
\newcommand{\QQ}{\mathcal{Q}}
\newcommand{\CC}{\mathcal{C}}
\renewcommand{\O}{\mathcal{O}}
\renewcommand{\H}{\mathbb{H}}
\renewcommand{\P}{\mathbb{P}}
\newcommand{\PB}{\mathcal{P}}
\newcommand{\SL}{\mathrm{SL}}
\renewcommand{\epsilon}{\varepsilon}
\newcommand{\deuxp}{\!:\!}
\DeclareMathOperator{\Vol}{Vol}
\title{A formula for the volume of two-bridge knots}
\author{Julien Marché}
\begin{document}
\maketitle
\begin{abstract}
We give a closed formula for the volume of a two-bridge knot, more precisely for its Bloch invariant. We obtain this formula without triangulating the complement: instead, we derive it from the Hopf formula for the second homology of the fundamental group of the complement and a systematic use of Fox derivatives.
\end{abstract}

\section{Introduction}

Let $p,q$ be two coprime odd integers satisfying $0<q<p$. We denote by $K(p,q)\subset S^3$ the two-bridge knot with parameter $(p,q)$ and recall its main properties, see \cite{BZH}. 
\begin{enumerate}
    \item The double cover of $S^3$ ramified along $K(p,q)$ is the lens space $L(p,q)$.
    \item The knots $K(p,q)$ and $K(p',q')$ are isotopic (up to orientation) if and only if $p'=p$ and $q'=\pm q^{\pm 1}\mod p$. 
    \item The knot $K(p,q)$ is hyperbolic if and only if $q\ne 1$. 
\end{enumerate}

We define a sequence of signs $\epsilon_n$ by the formula $\epsilon_n=(-1)^{\lfloor nq/p\rfloor}$ and a sequence of polynomials $P_n,Q_n\in\Z[x]$ of respective degrees $n$ and $n-1$ by the formula 
$$\frac{P_n(x)}{Q_n(x)}=\epsilon_1 x+
\cfrac{1}
{\epsilon_2 x+\cfrac{1}{\cdots+\cfrac{1}{\epsilon_{n}x}}
}.$$

We denote by $D:\C\to \R$ the Bloch-Wigner dilogarithm: $D(z)$ is the volume of the ideal hyperbolic tetrahedron whose vertices in $\P^1(\C)=\partial \H^3$ are $\infty,0,1,z$, see \cite{dilog}. Finally, we set $\ell=\ell(p,q)$ to be the unique (odd) integer satisfying $0<\ell<2p$ and congruent to $-q^{-1}$ modulo $2p$. 
\begin{theorem}\label{principal}
Let $0<q<p$ be as above and set $Z_{p,q}=\{x\in \C, P_{p-1}(x)=0\}$. For any $x\in Z_{p,q}$ and $n>0$, we define $z_n=P_n(x)/Q_n(x)$ and set $z_0=\infty$. 
The volume $V(p,q)$ of $S^3\setminus K(p,q)$ is given by the following formula:
\begin{equation*}
\begin{split}
V(p,q)=\max_{x\in Z_{p,q}}\Big\{\sum_{j=1}^{\frac{p-1}{2}}\Big(
D\big(\frac{z_{\ell-1}z_{2p-2}-z_{2j-2}z_{2p-2}+z_{2j-2}z_{\ell-1}}{z_{\ell-1}z_{2p-2}-z_{2j}z_{2p-2}+z_{2j}z_{\ell-1}}\big)+D\big(\frac{z_{2j-2}}{z_{2j}}\big)
\\
+D\big(\frac{z_{2j-2}(z_{2p-2}-z_{2j})}{z_{2j}(z_{2p-2}-z_{2j-2})}\big)
+D\big(\frac{(z_{2p-2}-z_{2j-2})(z_{\ell-1}-z_{2j})}{(z_{2p-2}-z_{2j})(z_{\ell-1}-z_{2j-2})}\big)
\Big)\Big\}.
\end{split}
\end{equation*}
\end{theorem}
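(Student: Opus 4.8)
The plan is to identify, for each $x\in Z_{p,q}$, the quantity inside the braces with the volume of the corresponding parabolic $\mathrm{PSL}_2(\C)$-representation $\rho_x$ of $\pi:=\pi_1(S^3\setminus K(p,q))$, and then to recall that this volume is maximal exactly at the geometric (discrete faithful) representation. The volume of $\rho_x$ is read off from its Bloch invariant $\beta(\rho_x)\in\mathcal{B}(\C)$, which is the image of the relative fundamental class under $H_3(M,\partial M)\cong H_3(\pi,\pi_P)\xrightarrow{(\rho_x)_*}H_3(\mathrm{PSL}_2(\C),B)\to\mathcal{B}(\C)$ (here $M$ is the complement, $\pi_P=\pi_1(\partial M)$, $B$ a parabolic subgroup, and the first isomorphism uses that $M$ is aspherical); the Bloch-invariant/volume dictionary is that of Neumann--Yang and Zickert. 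So the real content is an explicit chain-level representative of this class, and that is where the Hopf formula and the Fox calculus enter.

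\textbf{The representations and the reduction.} I would start from the Riley presentation $\pi=\langle a,b\mid r\rangle$ with $a,b$ meridians and $r$ the two-bridge relator whose $n$-th letter carries the sign $\epsilon_n=(-1)^{\lfloor nq/p\rfloor}$, and from the one-parameter family $\rho_x$ sending $a,b$ to parabolics with parameter $x$; the relation $r=1$ holds precisely when the Riley polynomial vanishes, and I would relate that polynomial to $P_{p-1}$, so that $Z_{p,q}$ is the set of irreducible parabolic $\mathrm{PSL}_2(\C)$-characters of $\pi$, the geometric one among them. A short matrix computation --- the product $\bigl(\begin{smallmatrix}\epsilon_1 x & 1\\ 1 & 0\end{smallmatrix}\bigr)\cdots\bigl(\begin{smallmatrix}\epsilon_n x & 1\\ 1 & 0\end{smallmatrix}\bigr)$ applied to $\infty$ is the continued fraction $P_n(x)/Q_n(x)$ --- identifies $z_n$ with the developing image $\rho_x(\gamma_n)\cdot\infty$ of the length-$n$ prefix $\gamma_n$ of the relator word (with $z_0=\infty$), and the same bookkeeping shows that $z_{\ell-1}$ is the developing image attached to the second meridian, the index $\ell-1$ being forced by the congruence $\ell\equiv-q^{-1}\bmod 2p$ together with the anti-periodicity $\epsilon_{n+p}=-\epsilon_n$ of the sign sequence. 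Since each $\rho_x$ satisfies $\mathrm{vol}(\rho_x)\le V(p,q)$ with equality iff $\rho_x$ is, up to conjugacy and complex conjugation, the holonomy (Gromov--Thurston's bound on the volume of a representation), the maximum over $Z_{p,q}$ equals $V(p,q)$, and it remains to prove $\mathrm{vol}(\rho_x)=\{\cdots\}$ for every $x\in Z_{p,q}$.

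\textbf{The homological computation.} Because $M$ is aspherical, $H_3(M,\partial M)\cong H_3(\pi,\pi_P)$; since $H_3(\pi)=0$ (the knot group has cohomological dimension $2$) while $H_2(\pi_P)\cong\Z$, the long exact sequence of the pair gives $H_3(\pi,\pi_P)\cong\Z$, with the fundamental class lying over a generator of $H_2(\pi_P)$. To represent it explicitly I would lift that generator of $H_2(\pi_P)$ to a $2$-cycle of $C_*(\pi)$, observe that it bounds because $H_2(\pi)=0$ --- which is precisely the Hopf formula $R\cap[F,F]=[F,R]$ for $\pi=F/R$, $F$ free of rank $2$ --- and then, passing from the bar resolution to the length-$2$ free resolution of $\Z$ over $\Z[\pi]$ coming from the presentation, choose the bounding $3$-chain $\sigma$ concretely: this amounts to solving over $\Z[\pi]$ a linear equation whose coefficients are the Fox derivatives $\partial r/\partial a,\ \partial r/\partial b$ of the two-bridge relator, i.e. to writing down an explicit identity among relations. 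Transporting the resulting relative $3$-cycle through $(\rho_x)_*$ and the decoration by the parabolic fixed point $\infty$ lands it in the complex of configurations of points on $\P^1(\C)$ that computes $\mathcal{B}(\C)$ by cross-ratios: a $3$-simplex $(g_0,g_1,g_2,g_3)$ becomes the ideal tetrahedron $(g_0\infty,g_1\infty,g_2\infty,g_3\infty)$, hence a cross-ratio, and after applying $D$ the flat tetrahedra coming from the torus relator (whose vertices lie on a common horosphere) drop out. With $\sigma$ chosen as a triangulation of the relator polygon coned off suitably from the vertices $\infty$ and $z_{\ell-1}$, the surviving tetrahedra fall into $\frac{p-1}{2}$ blocks of four, and evaluating their cross-ratios in terms of the $z_n$ yields exactly the four dilogarithm arguments displayed in the theorem.

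\textbf{Main obstacle.} The soft steps --- the aspherical identification, the Bloch-invariant/volume dictionary, and the reduction to the geometric root via Gromov--Thurston's inequality --- are routine modulo conventions (orientation, $\mathrm{SL}_2$ versus $\mathrm{PSL}_2$ lift, decoration-independence). The real work, and the likely obstacle, is the explicit choice of $\sigma$ and the cross-ratio bookkeeping it entails: (i) verifying the Fox-derivative identity over $\Z[\pi]$ that certifies $\sigma$ is a chain-level boundary, which is what pins down the combinatorics of $\sigma$; (ii) arranging that the development of $\sigma$ uses, within each block, only the six vertices $0,\infty,z_{2j-2},z_{2j},z_{2p-2},z_{\ell-1}$, which forces the sign symmetry $\epsilon_{n+p}=-\epsilon_n$ and the precise shape of the two-bridge word; and (iii) tracking orientations so that all four cross-ratios per block enter $D$ with coefficient $+1$, and checking that the remaining tetrahedra are flat. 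I would carry out (i) first, since it constrains the admissible shapes of $\sigma$, then (ii), then (iii) together with the final simplification of the cross-ratios.
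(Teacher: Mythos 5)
Your outline follows the paper's route step for step: parabolic representations parametrized by the roots of $P_{p-1}$, the Hopf formula plus Fox calculus to produce an explicit bar-complex $3$-chain bounding the peripheral torus, the pushforward to the configuration complex of $\P^1(k)$, and the maximum over $Z_{p,q}$ justified by volume rigidity for representations. So there is no divergence of strategy to report; the problem is that the step you label the ``main obstacle'' is not residual bookkeeping but is the actual content of the proof, and your proposal leaves it open. Concretely, the paper never chooses a triangulation $\sigma$ of a ``relator polygon'': the bounding chain is forced, with no choices, by an explicit identity among relations in the free group $F=\langle u,v\rangle$, namely $[w^*w,u]=[w^*,r][r,g]$ with $g=u^{\epsilon_0}v^{\epsilon_1}\cdots u^{\epsilon_{\ell-1}}v^{\epsilon_\ell}$ (Lemmas \ref{lemmefonda} and \ref{prodcom}), combined with a general lemma converting any identity $[g,h]=\prod_i[f_i,r_i]$ into a bar-complex $3$-chain via Fox derivatives. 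Proving that identity is precisely where $\ell\equiv -q^{-1}\bmod 2p$ and the symmetries $\epsilon_{i+p}=-\epsilon_i$, $\epsilon_{-i}=-\epsilon_i$ (palindromicity of $\epsilon_1,\ldots,\epsilon_{p-1}$) enter, and it is what makes $z_{\ell-1}$ (via $g^{-1}\infty=v(z_{\ell-1})$) and $z_{2p-2}=v^{-1}\infty$ appear in the final cross-ratios; nothing in your proposal produces it, and ``solving a linear equation over $\Z[\pi]$'' is not quite the right target, since the conversion lemma needs an identity in $F$ itself, not merely a relation among Fox derivatives in $\Z[G]$.

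The simplification mechanism is also not the one you describe: no appeal to flat tetrahedra after applying $D$ is needed, because every bar term $[g_1|g_2|g_3]$ in which some entry fixes $\infty$ already maps to a degenerate tuple, hence to zero, in $C_*(\P^1(k))$. What survives of the chain of Proposition \ref{calcul} is the single term $\Phi([g-w^*\,|\,(1-v)\frac{\partial w}{\partial v}\,|\,v])$, and expanding the Fox derivative $\frac{\partial w}{\partial v}$ into its $(p-1)/2$ monomials yields directly the four families of cross-ratios on the vertices $v(z_{\ell-1}),\infty,0,z_{2p-2},z_{\ell-1},z_{2j-2},z_{2j}$, the signs being absorbed by $[z_0,z_1,z_2,z_3]=-[z_0,z_1,z_3,z_2]$. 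So the plan is the right one, but to count as a proof it must (a) state and prove the commutator identity above and (b) carry out this expansion and the cross-ratio evaluation; as written, the specific arguments of $D$ in the statement are never derived.
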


There is a Bloch group version of the theorem: indeed, suppose $q>1$ and let $k$ be the trace field of $K(p,q)$, which is generated by $x^2$ for some root $x$ of $P_p$. There is a representation $\rho:\pi_1(S^3\setminus K(p,q))\to \SL_2(k)$ whose restriction to the boundary is parabolic, defined by Riley in \cite{Riley}. It defines a Bloch invariant $\beta_{p,q}\in \B(k)$ which recovers the volume: this invariant is given by the same formula as in Theorem \ref{principal} without the max and with $D(z)$ replaced by $[z]$, see Section \ref{Bloch}.

As an illustration, we plot in Figure \ref{feu} all pairs $(\frac{q}{p}, V(p,q))$ for $p<50$.

\begin{figure}[htbp]
    \centering
    \def\svgwidth{12cm}
\begingroup%
  \makeatletter%
  \providecommand\color[2][]{%
    \errmessage{(Inkscape) Color is used for the text in Inkscape, but the package 'color.sty' is not loaded}%
    \renewcommand\color[2][]{}%
  }%
  \providecommand\transparent[1]{%
    \errmessage{(Inkscape) Transparency is used (non-zero) for the text in Inkscape, but the package 'transparent.sty' is not loaded}%
    \renewcommand\transparent[1]{}%
  }%
  \providecommand\rotatebox[2]{#2}%
  \newcommand*\fsize{\dimexpr\f@size pt\relax}%
  \newcommand*\lineheight[1]{\fontsize{\fsize}{#1\fsize}\selectfont}%
  \ifx\svgwidth\undefined%
    \setlength{\unitlength}{424.80000173bp}%
    \ifx\svgscale\undefined%
      \relax%
    \else%
      \setlength{\unitlength}{\unitlength * \real{\svgscale}}%
    \fi%
  \else%
    \setlength{\unitlength}{\svgwidth}%
  \fi%
  \global\let\svgwidth\undefined%
  \global\let\svgscale\undefined%
  \makeatother%
  \begin{picture}(1,0.65932203)%
    \lineheight{1}%
    \setlength\tabcolsep{0pt}%
    \put(0,0){\includegraphics[width=\unitlength,page=1]{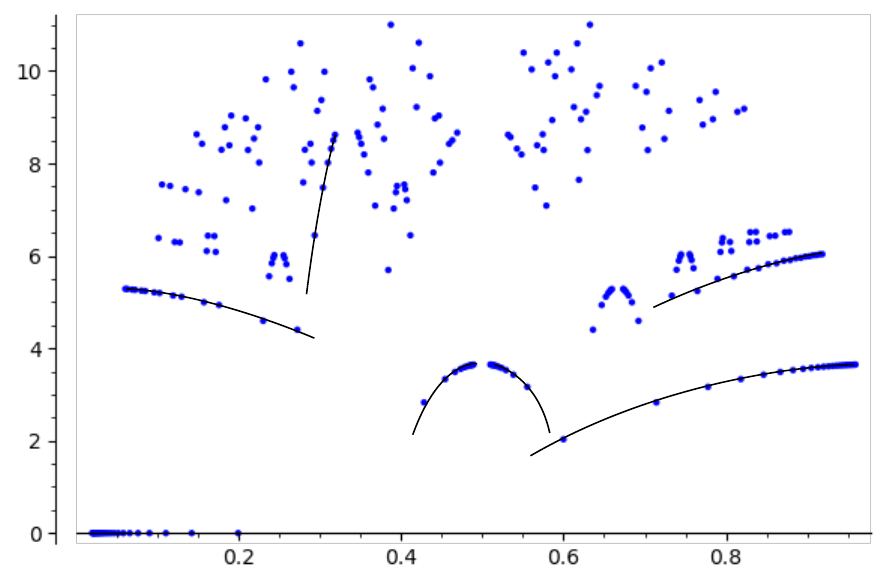}}%
    \put(0.17465391,0.26815288){\color[rgb]{0,0,0}\makebox(0,0)[lt]{\lineheight{1.25}\smash{\begin{tabular}[t]{l}$[\infty,3]$\end{tabular}}}}%
    \put(0.35304584,0.31211461){\makebox(0,0)[lt]{\lineheight{1.25}\smash{\begin{tabular}[t]{l}$[3,\infty,2]$\end{tabular}}}}%
    \put(0.69464633,0.16623887){\makebox(0,0)[lt]{\lineheight{1.25}\smash{\begin{tabular}[t]{l}$[1,\infty,2]$\end{tabular}}}}%
    \put(0.42571893,0.14329911){\makebox(0,0)[lt]{\lineheight{1.25}\smash{\begin{tabular}[t]{l}$[2,\infty]$\end{tabular}}}}%
    \put(0.59608077,0.23579475){\makebox(0,0)[lt]{\lineheight{1.25}\smash{\begin{tabular}[t]{l}$[2,-\infty]$\end{tabular}}}}%
    \put(0.8157132,0.3159683){\makebox(0,0)[lt]{\lineheight{1.25}\smash{\begin{tabular}[t]{l}$[1,\infty,4]$\end{tabular}}}}%
    \put(0.12503634,0.07780387){\makebox(0,0)[lt]{\lineheight{1.25}\smash{\begin{tabular}[t]{l}$[\infty]$\end{tabular}}}}%
  \end{picture}%
\endgroup%

    \caption{Volumes of two-bridge knots.}
    \label{feu}
\end{figure}

As a subset of $\R^2$, it has many accumulation points that we materialize using black lines. Recall that writing $\frac{p}{q}$ as a continued fraction 
$$\frac{p}{q}=a_1+\cfrac{1}{a_2+\cfrac{1}{\cdots+\frac{1}{a_n}}}=[a_1,\ldots,a_n]\textrm{ for }a_1,\ldots,a_n\in \Z\setminus\{0\}$$
yields a diagram for $K(p,q)$ in the Conway form given (for odd $n$) in Figure \ref{conway}.
\begin{figure}[htbp]
\begin{center}
\includegraphics[width=12cm]{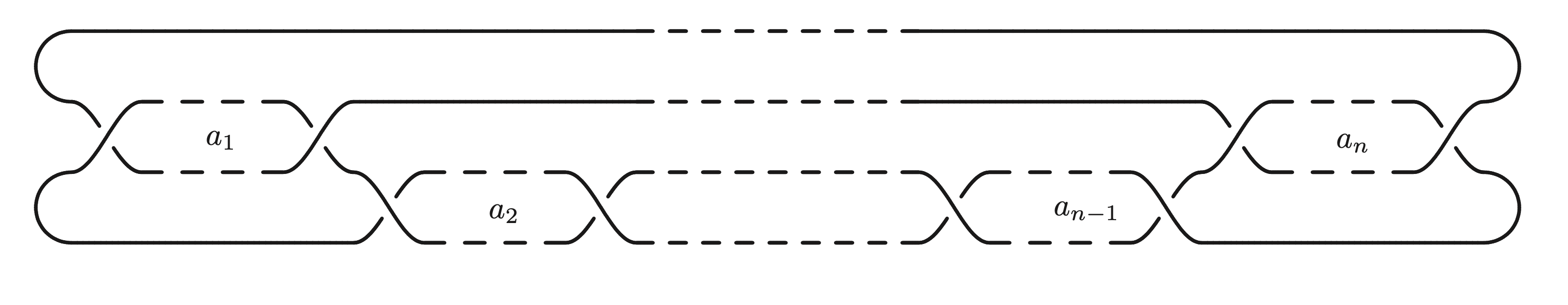}
\end{center}
\caption{Conway normal form}
\label{conway}
\end{figure}

The number of twists is denoted by the integer $|a_i|$, and the convention of sign of $a_i$ differs depending on the parity of $i$. In the picture, all $a_i$ are positive. Indeed, there is a unique continued fraction with positive entries yielding $q/p$ which provides an alternating diagram. Using Lackenby's bound \cite{Lackenby}

$$v_3\frac{n-2}{2}\le \Vol(p,q)\le 16v_3(n-1)$$
where $v_3=D(e^{i\pi/3})$, we know that $V(p,q)$ is roughly proportional to the depth of the positive continued fraction expressing $p/q$. In particular, it is unbounded.

Take any continued fraction of the form $[a_1,\ldots,a_n]$, and suppose that the integers $a_{i_1},\ldots,a_{i_k}$ go to infinity for some $1\le i_1<\cdots<i_k\le n$. The corresponding knot converges to the complement of a hyperbolic link with $k+1$ components, obtained by circling each of the twist regions of index $i_1,\ldots,i_k$ with a trivial knot. The continued fraction also converges, which explains the accumulation points observed in Figure \ref{feu}: we parametrize them by writing $\infty$ in place of $a_{i_1},\ldots,a_{i_k}$. For instance, the line $[\infty]$ correspond to the fractions $\frac{n}{1}$ for odd $n$, hence to the knots $K(n,1)$ which are torus knots, hence with trivial volume. The line $[1,\infty,2]$ correspond to $1+\frac{1}{n+1/2}$ hence to the knots $K(2n+3,2n+1)$ which are twist knots.

There are also accumulation points of accumulation points (and further) but Lackenby's bound show that they appear higher and higher in the picture. It would be interesting to study the coarsest topology on $\Q$ making the map $q/p\mapsto K(p,q)$ continuous.

Let us now comment on the technique of proof. It relies mainly on the simple presentation of $G=\pi_1(S^3\setminus K(p,q))$ and its parabolic representation as described by Riley in \cite{Riley}. The main trick is to avoid triangulating the complement: instead, we start in Section \ref{Hopf} from the Hopf formula expressing that the ``torus'' made by the meridian and the longitude is a trivial class in $H_2(G,\Z)$. Using Fox calculus, we get in Section \ref{Fox} an explicit 3-chain in the bar complex of $G$ bounding the peripheral torus. Then, it is a standard procedure to get from this either the volume or the Bloch invariant. We explain this in Section \ref{Bloch} and apply it in Section \ref{App}. 
This seemingly new technique is the main interest of the article and can in principle be applied to other families of knots, for instance pretzel knots. 

Let us mention previous work on the same topic: in \cite{Guer}, the authors triangulate the complement in order to prove the existence of a hyperbolic structure but do not provide a ``closed formula'' for their volume: here we take for granted the existence of the hyperbolic structure. In \cite{HLMR}, the authors provide a formula for two-bridge knot orbifolds by deforming the representation of the complement and using a Schläffli formula for the volume. Our techniques should apply to their case but the trace field is more difficult to compute.

{\bf Acknowledgments:} We thank Gregor Masbaum for thorough discussions around this article and Pierre-Vincent Koseleff for his interest, his numerical computations confirming our formulas and his sharing of Figure \ref{conway}.

\section{Commuting meridian and longitude}\label{Hopf}

Let $p,q$ be two coprime odd integers satisfying $0<q<p$. The fundamental group $G$ of the complement of the two-bridge knot $K(p,q)$ admits the following presentation where we have set $\epsilon_i=(-1)^{\lfloor iq/p\rfloor}$ (see \cite{BZH}):
$$G=\langle u,v | wu=vw\rangle, \quad w=u^{\epsilon_1}v^{\epsilon_2}\cdots u^{\epsilon_{p-2}}v^{\epsilon_{p-1}}.$$
Formally, this means that $G=F/R$ where $F$ is the free group generated by $u$ and $v$ and $R$ is the subgroup of $F$ normally generated by $r=wuw^{-1}v^{-1}$. For any word $x$ in the letters $u$ and $v$, we denote by $x^*$ the word obtained by writing the letters in the reverse direction. This operation is an anti-involution in the sense that it satisfies $(xy)^*=y^*x^*$ and $(x^*)^*=x$ for any $x,y\in F$.

Recall that we defined $\ell$ to be the unique integer satisfying $0<k<2p$ and $k=-q^{-1}\mod 2p$.
\begin{lemma}\label{lemmefonda}
The following equality holds in $F$:
$$uw^*v^{-1}(w^*)^{-1}=grg^{-1}, \quad\textrm{ where } g=u^{\epsilon_0}v^{\epsilon_1}\dots u^{\epsilon_{\ell-1}}v^{\epsilon_\ell}.$$
In particular, the anti-involution $x\mapsto x^*$ induces an anti-involution on $G$. 
\end{lemma}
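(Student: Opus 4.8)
The plan is to encode every word occurring in the statement as a finite segment of one bi‑infinite periodic word in $u,v$, and then to verify the identity letter by letter. First I would extend the signs by $\epsilon_n=(-1)^{\lfloor nq/p\rfloor}$ for all $n\in\Z$, set $z_n=u$ for $n$ even and $z_n=v$ for $n$ odd, and for $a\le b$ write $G_{[a,b)}=z_a^{\epsilon_a}z_{a+1}^{\epsilon_{a+1}}\cdots z_{b-1}^{\epsilon_{b-1}}\in F$, which is automatically a reduced word since consecutive letters involve different generators. The only arithmetic input, each a one‑line manipulation of $\lfloor\cdot\rfloor$ using the parity of $q$, is: $\epsilon_{n+2p}=\epsilon_n$; $\epsilon_{n+p}=-\epsilon_n$; $\epsilon_{p-n}=\epsilon_n$ whenever $p\nmid n$; $\epsilon_0=1$ and $\epsilon_\ell=-1$; and the crucial shift identity $\epsilon_{\ell+n}=\epsilon_n$ whenever $p\nmid n$. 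This last one is exactly where the defining congruence $q\ell\equiv-1\pmod{2p}$ is used: writing $q\ell=2ps-1$ one computes $\lfloor(\ell+n)q/p\rfloor=2s+\lfloor nq/p\rfloor$ for $p\nmid n$.

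Next I would rewrite the relevant words. Since $w=u^{\epsilon_1}v^{\epsilon_2}\cdots v^{\epsilon_{p-1}}$ has $m$‑th letter $z_{m-1}^{\epsilon_m}$, the palindrome $\epsilon_{p-n}=\epsilon_n$ gives $w^*=G_{[1,p)}$, and then $\epsilon_{n+p}=-\epsilon_n$ together with the palindrome gives $(w^*)^{-1}=G_{[p+1,2p)}$; since $u=z_0^{\epsilon_0}$ and $v^{-1}=z_p^{\epsilon_p}$ (with $\epsilon_p=(-1)^q=-1$), concatenation yields
\[
uw^*v^{-1}(w^*)^{-1}=G_{[0,2p)},
\]
while $g=G_{[0,\ell+1)}$ by definition. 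The heart of the argument is the identity
\[
r=wuw^{-1}v^{-1}=G_{[\ell+1,\,\ell+1+2p)},
\]
an equality of reduced words of length $2p$ which I would check in four blocks: the $m$‑th letter $z_{m-1}^{\epsilon_m}$ of $w$ equals the $m$‑th letter $z_{\ell+m}^{\epsilon_{\ell+m}}$ of the right side because $\ell+1$ is even and $\epsilon_{\ell+m}=\epsilon_m$; the central letter $u$ matches $z_{\ell+p}^{\epsilon_{\ell+p}}$ because $\ell+p$ is even and $\epsilon_{\ell+p}=-\epsilon_\ell=1$; the $k$‑th letter $z_{p-k-1}^{-\epsilon_{p-k}}$ of $w^{-1}$ matches $z_{\ell+p+k}^{\epsilon_{\ell+p+k}}$ because $\epsilon_{\ell+p+k}=\epsilon_{p+k}=-\epsilon_k=-\epsilon_{p-k}$; and the last letter $v^{-1}$ matches $z_{\ell+2p}^{\epsilon_{\ell+2p}}$ by $2p$‑periodicity and $\epsilon_\ell=-1$. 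Since $G_{[0,\ell+1)}$ is a common prefix of $G_{[0,2p)}$ and of $G_{[0,\ell+1+2p)}=G_{[0,2p)}\,G_{[0,\ell+1)}$, free cancellation then gives
\[
g\,r\,g^{-1}=G_{[0,\ell+1)}\,G_{[\ell+1,\ell+1+2p)}\,G_{[0,\ell+1)}^{-1}=G_{[0,\ell+1+2p)}\,G_{[0,\ell+1)}^{-1}=G_{[0,2p)}=uw^*v^{-1}(w^*)^{-1},
\]
which is the asserted identity in $F$.

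For the final assertion, I would observe that $r^*=v^{-1}(w^*)^{-1}uw^*$ is the cyclic rotation of $uw^*v^{-1}(w^*)^{-1}$ moving the prefix $uw^*$ to the end, so $r^*=(uw^*)^{-1}\,(uw^*v^{-1}(w^*)^{-1})\,(uw^*)=h\,r\,h^{-1}$ with $h=(uw^*)^{-1}g$; hence $r^*\in R$. Because $x\mapsto x^*$ is an anti‑automorphism of $F$ of order two and $R$ is the normal closure of $r$, this forces $R^*=R$, so $*$ descends to an anti‑involution of $G=F/R$. The main obstacle is the middle step, the block‑by‑block identification of $r=wuw^{-1}v^{-1}$ with the segment $G_{[\ell+1,\ell+1+2p)}$, where one must keep indices and parities straight; everything there rests on the innocuous‑looking identity $\epsilon_{\ell+n}=\epsilon_n$ for $p\nmid n$, the only place the definition of $\ell$ enters, and precisely what makes the rotation by $\ell+1$ carry $uw^*v^{-1}(w^*)^{-1}$ to $r$.
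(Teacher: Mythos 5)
Your proposal is correct and follows essentially the same route as the paper: you identify $uw^*v^{-1}(w^*)^{-1}$ with the full period of exponents $\epsilon_0,\ldots,\epsilon_{2p-1}$ on alternating letters, identify $r$ with the window shifted by $\ell+1$ using the key identity $\epsilon_{\ell+n}=\epsilon_n$ for $p\nmid n$ (coming from $q\ell\equiv-1\bmod 2p$) together with the palindromic and period symmetries, and conclude by conjugating with the prefix $g$; the deduction that $r^*\in R$ hence $*$ descends to $G$ also matches the paper's argument. Your version merely makes explicit (via the segment notation and letter-by-letter blocks) what the paper phrases as the sequence $S$ being an even shift of $S'$.
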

\begin{proof}

Let us first comment how the first point implies the second. 
We compute $r^*=v^{-1}(w^{-1})^*uw^*$: it is conjugated to $uw^*v^{-1}(w^*)^{-1}$, hence to $r$ by the first statement. This shows that $r^*$ belongs to $R$ as we wanted. 

To prove the first statement, we observe that the relation $r$ is a sequence of alternating $u$ and $v$ with powers given by the following sequence:
$$S:\epsilon_1,\ldots,\epsilon_{p-1},1,-\epsilon_{p-1},\ldots,-\epsilon_1,-1$$
whereas $uw^*v^{-1}(w^*)^{-1}$ corresponds to the sequence:
$$S':1,\epsilon_{p-1},\ldots,\epsilon_1,-1,-\epsilon_1,\ldots,-\epsilon_{p-1}$$
The sequence $\epsilon_i$ has the symmetries: $\epsilon_{i+p}=-\epsilon_{i}$ and $\epsilon_{-i}=(-1)^{\lfloor-iq/p\rfloor}=-\epsilon_i$ if $i\ne 0\mod p$. In particular the sequence $\epsilon_1,\ldots,\epsilon_{p-1}$ is palindromic.
We also observe that the sequence $S'$ is simply the sequence $\epsilon_0,\ldots,\epsilon_{2p-1}$. The lemma is proven if one can  show that the sequence $S$ is obtained from $S'$ by an even shift. 

By definition of $\ell$, there is an integer $s$ such that $\ell q=-1+2ps$. It follows that $\epsilon_{\ell+i}=(-1)^{\lfloor \ell q/p+iq/p\rfloor}=(-1)^{\lfloor iq/p-1/p\rfloor}$. If $i$ is not divisible by $p$ we have $\epsilon_{\ell+i}=\epsilon_i$ otherwise $\epsilon_{\ell+i}=-\epsilon_i$. This shows that starting $S'$ at the index $\ell+1$, one recovers $S$, which proves the lemma.
\end{proof}

It is well-known that a longitude of $K(p,q)$ is given by $l=w^*w$: using Lemma \ref{lemmefonda}, we can prove it from the presentation of $G$. We get precisely the following formula:

\begin{lemma}\label{prodcom}
Set $l=w^*w$. We have in $F$ the identity:
$$ [l,u]=[w^*,r][r,g].$$
\end{lemma}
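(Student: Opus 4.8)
The plan is to derive the identity $[l,u]=[w^*,r][r,g]$ purely as an equation in the free group $F$, using Lemma \ref{lemmefonda} as the main input. First I would rewrite the goal. Since $l=w^*w$ and $r=wuw^{-1}v^{-1}$, I observe that $[l,u]=w^*w u w^{-1}(w^*)^{-1}u^{-1}$, and the factor $wuw^{-1}$ can be replaced by $rv$. So $[l,u]=w^*(rv)(w^*)^{-1}u^{-1}$. Now I want to isolate the ``error'' between $w^*v(w^*)^{-1}$ and $u$: by Lemma \ref{lemmefonda} we have $uw^*v^{-1}(w^*)^{-1}=grg^{-1}$, equivalently $w^*v(w^*)^{-1}=u(grg^{-1})^{-1}=ugr^{-1}g^{-1}$.

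The next step is a bookkeeping computation. Starting from $[l,u]=w^*r(w^*)^{-1}\cdot w^*v(w^*)^{-1}\cdot u^{-1}$ and substituting the above, I get $[l,u]=w^*r(w^*)^{-1}\cdot ugr^{-1}g^{-1}\cdot u^{-1}$. I want to massage this into $[w^*,r][r,g]=w^*r(w^*)^{-1}r^{-1}\cdot rgr^{-1}g^{-1}$. Comparing, it suffices to show that $(w^*)^{-1}ugr^{-1}g^{-1}u^{-1}$ equals $(w^*)^{-1}r^{-1}\cdot rgr^{-1}g^{-1}=(w^*)^{-1}gr^{-1}g^{-1}$, i.e. that $ugr^{-1}g^{-1}u^{-1}=gr^{-1}g^{-1}$, i.e. that $u$ commutes with $gr^{-1}g^{-1}=(grg^{-1})^{-1}$. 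But $grg^{-1}=uw^*v^{-1}(w^*)^{-1}$ by Lemma \ref{lemmefonda}, and one checks directly that $u$ commutes with $uw^*v^{-1}(w^*)^{-1}$ — indeed $u\cdot uw^*v^{-1}(w^*)^{-1}\cdot u^{-1}=u\cdot(u w^* v^{-1} (w^*)^{-1} u^{-1})$, which after cancelling is visibly conjugate-invariant... so this needs a small genuine argument rather than pure cancellation.

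Here is the point that actually requires Lemma \ref{lemmefonda}'s content and not just its notation: I must show $u$ commutes with $grg^{-1}=uw^*v^{-1}(w^*)^{-1}$. Writing $h=w^*v^{-1}(w^*)^{-1}$, this says $u(uh)u^{-1}=uh$, i.e. $uhu^{-1}=h$, i.e. $u$ commutes with $w^*v^{-1}(w^*)^{-1}$, i.e. $u$ commutes with $w^*v(w^*)^{-1}$. That is false in $F$ in general; so in fact the correct reading is that I should \emph{not} try to commute $u$ past things in $F$, but rather keep $[l,u]=w^*r(w^*)^{-1}\,u g r^{-1} g^{-1} u^{-1}$ and recognize directly that $ugr^{-1}g^{-1}u^{-1}=u(grg^{-1})^{-1}u^{-1}$, while from Lemma \ref{lemmefonda}, $u^{-1}(grg^{-1})u=w^*v^{-1}(w^*)^{-1}\cdot u^{-1}\cdot u$ — let me instead just expand $grg^{-1}=uw^*v^{-1}(w^*)^{-1}$ so that $u^{-1}(grg^{-1})u = u^{-1}\cdot u w^* v^{-1}(w^*)^{-1}\cdot u = w^*v^{-1}(w^*)^{-1}u$, and then $u(grg^{-1})^{-1}u^{-1}=\bigl(u^{-1}grg^{-1}u\bigr)^{-1}$ only if... no. The clean approach: substitute $grg^{-1}=uw^*v^{-1}(w^*)^{-1}$ directly into $[w^*,r][r,g]$ nowhere, and instead substitute it into $[l,u]$: since $wuw^{-1}=rv$, write $[l,u]=w^*rv(w^*)^{-1}u^{-1}$ and then use $v(w^*)^{-1}=(w^*)^{-1}\cdot w^*v(w^*)^{-1}=(w^*)^{-1}u^{-1}grg^{-1}u$ — wait, from $uw^*v^{-1}(w^*)^{-1}=grg^{-1}$ we get $w^*v^{-1}(w^*)^{-1}=u^{-1}grg^{-1}$, hence $w^*v(w^*)^{-1}=(u^{-1}grg^{-1})^{-1}=gr^{-1}g^{-1}u$. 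So $[l,u]=w^*r(w^*)^{-1}\cdot\bigl(w^*v(w^*)^{-1}\bigr)\cdot u^{-1}=w^*r(w^*)^{-1}\cdot gr^{-1}g^{-1}u\cdot u^{-1}=w^*r(w^*)^{-1}gr^{-1}g^{-1}$, which is exactly $[w^*,r][r,g]$ once one inserts $r^{-1}r$ between $(w^*)^{-1}$ and $g$.

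So the real proof is short: (1) from Lemma \ref{lemmefonda}, deduce $w^*v(w^*)^{-1}=gr^{-1}g^{-1}u$; (2) note $wuw^{-1}=rv$ from the definition of $r$; (3) expand $[l,u]=w^*wuw^{-1}(w^*)^{-1}u^{-1}=w^*rv(w^*)^{-1}u^{-1}$; (4) rewrite $v(w^*)^{-1}=(w^*)^{-1}\bigl(w^*v(w^*)^{-1}\bigr)=(w^*)^{-1}gr^{-1}g^{-1}u$ and substitute, the trailing $u$ cancelling $u^{-1}$; (5) insert $r^{-1}r$ to read off $[w^*,r][r,g]$. The main obstacle is purely organizational — keeping the inversions and the order of the alternating-word factors straight, since everything happens in the nonabelian free group $F$ and a single misplaced inverse breaks the identity; there is no conceptual difficulty beyond correctly applying Lemma \ref{lemmefonda}.
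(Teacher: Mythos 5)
Your final argument is correct and is essentially the paper's own computation: substitute $wuw^{-1}=rv$ into $[l,u]=w^*wuw^{-1}(w^*)^{-1}u^{-1}$, use Lemma \ref{lemmefonda} in the inverted form $w^*v(w^*)^{-1}=gr^{-1}g^{-1}u$, and insert $r^{-1}r$ to read off $[w^*,r][r,g]$. The mid-proof detour about $u$ commuting with $grg^{-1}$ is a false trail that you correctly abandon, so it does not affect the validity of the clean version you end with.
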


This formula says more: recall that the Hopf formula identifies $H_2(G,\Z)$ with $[F,F]\cap R/[F,R]$. Here $[l,u]$ represents a class in $H_2(G,\Z)$ which apparently vanishes. 
This fact is topologically obvious because this class corresponds to the boundary torus under the isomorphism $H_2(G,\Z)=H_2(S^3\setminus K(p,q),\Z)$ and this torus bounds the fundamental class of $S^3\setminus K(p,q)$. It is also algebraically clear because the presentation involves only one relation $r$ whose abelianization is non trivial. However, we will need the explicit formula given by the lemma, that we prove now. 

\begin{proof}
We compute:
\begin{eqnarray*} [w^*w,u]&=&w^*wuw^{-1}(w^*)^{-1}u^{-1}=w^*rv(w^*)^{-1}u^{-1}\\
&=&[w^*,r]rw^*v(w^*)^{-1}u^{-1}=[w^*,r][r,g]
\end{eqnarray*}
This element belongs to $[F,R]$, where $F=\langle u,v\rangle$ and $R$ is the subgroup normally generated by $r$, showing the lemma.
\end{proof}

\section{From the Hopf formula to the bar complex}\label{Fox}

We now compare explicitly the Hopf formula for $H_2(G,\Z)$ with the definition of the same group coming from the bar complex. This is a standard exercise, see \cite[Ex 4. p.46]{Brown}: given a formula expressing the vanishing of some class in $H_2(G,\Z)$, it provides an explicit 3-chain that we will use to compute the volume of the knot complement. 

Recall the definition of the bar complex $C_*(G)$. The abelian group $C_k(G)$ is freely generated by $k$-tuples $[g_1|\cdots|g_k]$ with $g_1,\ldots,g_k\in G$. We extend this notation by multilinearity, replacing $g_i$ by elements in $\Z[G]$. We will need the following formulas for the differential where $\epsilon:\Z[G]\to \Z$ is the augmentation map:

$$\partial [g_1]=0,\quad \partial [g_1|g_2]=\epsilon(g_2)[g_1]-[g_1g_2]+\epsilon(g_1)[g_2],$$
$$\partial[g_1|g_2|g_3]=\epsilon(g_1)[g_2|g_3]-[g_1g_2|g_3]+[g_1|g_2g_3]-\epsilon(g_3)[g_1|g_2].$$

Suppose that $G=F/R$ is a presentation where $F$ is freely generated by $x_1,\ldots,x_n$. Following \cite[Proposition 5.4, p.43]{Brown}, there is an exact sequence of $\Z[G]$-modules
$$
\xymatrix{0\ar[r]& R_{\rm ab}\ar[r]^(0.3){\partial}& \bigoplus_{i=1}^n\Z[G] e_i\ar[r]^(0.6){\partial}& \Z[G]\ar[r]^\epsilon & \Z\ar[r]& 0.} 
$$
Here $R_{\rm ab}$ denotes the abelianization of $R$ and the differentials are defined as follows: $\partial (fe_i)=f(x_i-1)$ and for $r\in R$ we have:
$$\partial \overline{r}=\sum_{i=1}^n \frac{\partial r}{\partial x_i} e_i$$
In this formula, for any $f\in F$, $\frac{\partial f}{\partial x_i}\in \Z[G]$ is the Fox derivative: it is defined uniquely by the conditions $\frac{\partial x_j}{\partial x_i}=\delta_{ij}$ and 
$$\frac{\partial (fg)}{\partial x_i}=\frac{\partial f}{\partial x_i}+f\frac{\partial g}{\partial x_i}.$$
This derivative, restricted to $R$ is a group homomorphism, hence factors through $R_{\rm ab}$. 
The fact that $\partial^2=0$ comes from the following fundamental formula, valid for any $f\in F$:
$$f-1=\sum_{i=1}\frac{\partial r}{\partial x_i}(x_i-1).$$

We may compare this to the bar complex as follows:
$$
\xymatrix{&R_{\rm ab}\ar[r]\ar[d]^{\phi_2}& (\Z[G]^n)\ar[r]\ar[d]^{\phi_1} & \Z[G]\ar[d]\ar[d]^{\phi_0} \\
C_3(G)\ar[r]&C_2(G)\ar[r]&C_1(G)\ar[r]&C_0(G)
}
$$

The map $\phi_2$ is given by $\phi_2(f)=\sum_{i=1}^n [\frac{\partial f}{\partial x_i}|x_i]$. The map $\phi_1$ is given by $\phi_1(fe_i)=\epsilon(f)[x_i]$ and $\phi_0=\epsilon$. We leave to the reader checking that $\phi_*$ is a chain map. The maps $\phi_1,\phi_0$ are obviously $G$-invariant, not $\phi_2$. We have indeed $\phi_2(f.r)-\phi_2(r)=\phi_2(frf^{-1})-\phi_2(r)=\phi_2([f,r])$.

One computes 
\begin{equation}\label{phi2}\phi_2([f,r])=\sum_i [\frac{\partial [f,r]}{\partial x_i}|x_i]=\sum_i [(f-1)\frac{\partial r}{\partial x_i}|x_i]=-\sum_i \partial [f|\frac{\partial r}{\partial x_i}|x_i].
\end{equation}

This shows that taking the coinvariants of the first line, one has a morphism of complexes inducing an isomorphism in homology, proving the Hopf formula. 
$$
\xymatrix{R/[F,R]\ar[r]\ar[d]& F_{\rm ab}\ar[r]\ar[d] & \Z\ar[d] \\
C_2(G)/\partial C_3(G)\ar[r]&C_1(G)\ar[r]&C_0(G)
}
$$

This diagram is the key ingredient to prove the following proposition.

\begin{proposition}
Let $G$ be a group with presentation $G=F/R$ where $F$ is the free group generated by $x_1,\ldots,x_n$ and suppose that the following equation holds in $F$ for some $g,h,f_1,\ldots,f_k\in F$ and $r_1,\ldots,r_k\in R$:
\begin{equation}\label{ocl}
\quad\quad\quad\quad\quad\quad\quad\quad [g,h]=\prod_{i=1}^k [f_i,r_i].
\end{equation}

Then we have the equality $[g|h-1]-[h|g-1]=\partial z$ in $C_2(G)$ where 
$$z=\sum_{i=1}^n [g|\frac{\partial h}{\partial x_i}|x_i]-[h|\frac{\partial g}{\partial x_i}|x_i]-\sum_{j=1}^k\sum_{i=1}^n[f_j|\frac{\partial r_j}{\partial x_i}|x_i].$$
\end{proposition}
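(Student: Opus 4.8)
The plan is to apply the chain map $\phi_*$ built just above to the identity \eqref{ocl}, read as an equality in $R_{\rm ab}$, and to extract the bounding $3$-chain from the resulting identity in the bar complex. Both sides of \eqref{ocl} are equal in $F$ and lie in $[F,F]\cap R$ (the right-hand side lies in $R$ since $R$ is normal), so they represent the same class in $R_{\rm ab}$; applying the homomorphism $\phi_2\colon R_{\rm ab}\to C_2(G)$ to each side and comparing the two outputs modulo $\partial C_3(G)$ will give the claimed formula. Recall $\phi_2$ is well defined because the Fox derivatives restricted to $R$ and reduced mod $R$ are additive.

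The right-hand side is the easy one: formula \eqref{phi2} already gives $\phi_2([f_j,r_j])=-\sum_i\partial[f_j|\tfrac{\partial r_j}{\partial x_i}|x_i]$, and by additivity $\phi_2\big(\prod_{j}[f_j,r_j]\big)=-\sum_{j}\sum_i\partial[f_j|\tfrac{\partial r_j}{\partial x_i}|x_i]$.

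The bulk of the work is computing $\phi_2([g,h])=\sum_i[\tfrac{\partial[g,h]}{\partial x_i}|x_i]$. First I would expand $[g,h]=ghg^{-1}h^{-1}$ by Fox calculus in $\Z[F]$, getting $\tfrac{\partial[g,h]}{\partial x_i}=(1-ghg^{-1})\tfrac{\partial g}{\partial x_i}+(g-ghg^{-1}h^{-1})\tfrac{\partial h}{\partial x_i}$, and then reduce modulo $R$: since \eqref{ocl} forces $\overline{[g,h]}=1$ in $G$, the coefficients collapse to $(1-h)\tfrac{\partial g}{\partial x_i}+(g-1)\tfrac{\partial h}{\partial x_i}$ in $\Z[G]$. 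Next, exactly as in the derivation of \eqref{phi2} but now with the fundamental formula $\sum_i\tfrac{\partial f}{\partial x_i}(x_i-1)=f-1$ applied to $f=g$ and $f=h$ (which are \emph{not} in $R$, so these right-hand sides do not vanish and produce the terms $[g|h-1]$, $[h|g-1]$), I would use the formula for $\partial$ on $C_3(G)$ to write $\sum_i[(g-1)\tfrac{\partial h}{\partial x_i}|x_i]=[g|h-1]-\sum_i\partial[g|\tfrac{\partial h}{\partial x_i}|x_i]$ and the analogous identity with $g$ and $h$ interchanged (carrying an extra overall sign). Summing the two pieces yields $\phi_2([g,h])=[g|h-1]-[h|g-1]-\partial\big(\sum_i[g|\tfrac{\partial h}{\partial x_i}|x_i]-\sum_i[h|\tfrac{\partial g}{\partial x_i}|x_i]\big)$.

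Finally I would equate the two expressions of $\phi_2$ applied to the two sides of \eqref{ocl} and rearrange, which produces $[g|h-1]-[h|g-1]=\partial z$ with $z$ as stated. The only genuinely delicate point is bookkeeping: keeping track of which coefficients live in $\Z[F]$ and which in $\Z[G]$ — in particular invoking $\overline{[g,h]}=1$ to replace $ghg^{-1}$ by $h$ and $ghg^{-1}h^{-1}$ by $1$ — and applying the multilinear extension of the bar differential to coefficients in $\Z[G]$ correctly (recalling that $\epsilon$ takes the value $1$ on all group elements). Everything else repeats computations already carried out in the paragraphs preceding the proposition.
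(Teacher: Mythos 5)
Your proposal is correct and follows essentially the same route as the paper: apply $\phi_2$ to both sides of \eqref{ocl}, use \eqref{phi2} for the right-hand side, and combine the commuting-pair formula for $\phi_2([g,h])$ with the bar differential and the fundamental formula to produce $[g|h-1]-[h|g-1]$ up to $\partial t$. The only difference is that you derive the reduction of $\frac{\partial [g,h]}{\partial x_i}$ to $(1-h)\frac{\partial g}{\partial x_i}+(g-1)\frac{\partial h}{\partial x_i}$ explicitly, a step the paper merely asserts, and your sign bookkeeping there is accurate.
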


\begin{proof}
We compute $\phi_2$ on both side of Equation \eqref{ocl}. Applying equation \eqref{phi2} to the right hand side gives directly the double sum of the proposition. 

We can simplify the formula of $\phi_2$ in the case when it is applied to a pair of commuting elements, as in the left hand side of Equation \eqref{ocl}. Suppose $g,h\in F$ satisfy $[g,h]\in R$: then 
$$\phi_2([g,h])=\sum_{i=1}^n[(1-h)\frac{\partial g}{\partial x_i}+(g-1)\frac{\partial h}{\partial x_i}|x_i].$$

As $$\partial \sum_{i=1}^n[g|\frac{\partial h}{\partial x_i}|x_i]=\sum_{i=1}^n[(1-g)\frac{\partial h}{\partial x_i}|x_i]+\sum_{i=1}^n[g|\frac{\partial h}{\partial x_i}(x_i-1)]$$
we get 
$$\phi_2([g,h])+\partial t=\sum_{i=1}^n[g|\frac{\partial h}{\partial x_i}(x_i-1)]-\sum_{i=1}^n[h|\frac{\partial g}{\partial x_i}(x_i-1)]=[g|h-1]-[h|g-1].$$
where $t=\sum_{i=1}^n [g|\frac{\partial h}{\partial x_i}|x_i]-\sum_{i=1}^n [h|\frac{\partial g}{\partial x_i}|x_i]$ and we used the fundamental formula for $g$ and $h$ in the last equality. This finishes the proof of the proposition.
\end{proof}

Let us apply this proposition to the case of $G=\pi_1(S^3\setminus K(p,q))$. Using the formula of Lemma \ref{prodcom}, we have $[l,u]=[w^*,r][g,r]^{-1}$ which gives:

\begin{eqnarray*}
 \phi_2([w^*w,u])&=&\partial[g-w^*|\frac{\partial r}{\partial u}|u]+\partial[g-w^*|\frac{\partial r}{\partial v}|v]\\
 &=& \partial[g-w^*|(1-v)\frac{\partial w}{\partial u}+w|u]+\partial[g-w^*|(1-v)\frac{\partial w}{\partial v}-1|v].
 \end{eqnarray*}
We sum up the result in the following proposition.
\begin{proposition}\label{calcul}
Let us write $l=w^*w$ we have $[l|u-1]-[u|l-1]=\partial z$ where
$$z=[g-w^*|(1-v)\frac{\partial w}{\partial u}+w|u]+[g-w^*|(1-v)\frac{\partial w}{\partial v}-1|v]+[l|1|u]-[u|\frac{\partial l}{\partial u}|u]-[u|\frac{\partial l}{\partial v}|v].$$
\end{proposition}
\section{From the bar complex to the volume}\label{Bloch}

\subsection{Volume of closed hyperbolic 3-manifolds}
Let $D$ be the Bloch-Wigner dilogarithm: $D(z)$ is the volume of an ideal tetrahedron in $\H^3$ whose vertices have cross-ratio equal to $z$. 
It is known that given $\infty\in \P^1(\C)$, the $3$-cocycle 
$$c(g_1,g_2,g_3)=D([\infty:g_1\infty:g_1g_2\infty:g_1g_2g_3\infty])$$
represents the volume class in $H^3({\mathrm{PSL}}_2(\C),\R)$. This means that if $M$ is a closed hyperbolic $3$-manifold with holonomy $\rho:\pi_1(M)\to \mathrm{PSL}_2(\C)$, then denoting by $f:M\simeq \mathrm{B}\pi_1(M)\to \mathrm{BPSL}_2(\C)$ a continuous map inducing $\rho$ on fundamental groups, we get:
$$\operatorname{Vol}(M)=\int_M f^* c.$$

In the present article, we perform this computation in $H_3(\pi_1(M),\Z)$. Indeed, knowing a $3$-cycle $z_M$ representing $[M]$ in $C_3(\pi_1(M))$, one can simply compute $\operatorname{Vol}(M)=\langle c, \rho_*z\rangle$. 

The situation here is slightly more involved due to the fact that $M=S^3\setminus K(p,q)$ is not compact: it can be compactified into a $3$-manifold $\overline{M}$ with toric boundary such that the representation $\rho:\pi_1(\overline{M})\to \mathrm{PGL}_2(k)$ takes values in a subfield $k$ of $\C$ and restricted to the boundary, takes its values in $B$, the Borel subgroup of PGL$_2(k)$ stabilizing $\infty$. 

In the next section, we take this difficulty into account. Morever, we replace the volume with a finer invariant, called the Bloch invariant of $M$. For more details, we refer to \cite{NeumannYang}.

\subsection{Bloch group and Bloch invariant}

Let $C_*(\P^1(k))$ be the complex where $C_n(\P^1(k))$ is freely generated by $(n+1)$-tuples of distinct elements in $\P^1(k)$. An element $[z_0,\ldots,z_n]\in C_n(\P^1(k))$ with repeated terms will be considered to be $0$. The differential is defined as usual by $\partial [z_0,\ldots,z_n]=\sum_{i=0}^n (-1)^i [z_0,\ldots,\hat{z_i},\ldots,z_n]$. 

This complex is an acyclic complex of $G$-modules, where $G=\mathrm{PGL}_2(k)$. We denote by $C_*(\P^1(k))_{G}$ the complex of coinvariants: for instance $C_3(\P^1(k))_{G}$ is the free abelian group generated by $[z]$ for $z\in k\setminus\{0,1\}$. The element $[z]$ corresponds to the quadruple $[\infty,0,1,z]$ and the quadruple $[z_0,z_1,z_2,z_3]$ to its cross-ratio

$$[z_0:z_1:z_2:z_3]=\frac{(z_0-z_2)(z_1-z_3)}{(z_0-z_3)(z_1-z_2)}$$

By definition, the pre-Bloch group $\mathcal{P}(k)$ is the cokernel of the map $\partial : C_4(\P^1(k))_G\to C_3(\P^1(k))_G$. Writing $[\infty,0,1,x,y]\in C_4(\P^1(k))$, we find that $\PB(k)$ has generators $[z]$ for $z\in k\setminus\{0,1\}$ and relations 
$$[x]-[y]+[\frac{y}{x}]-[\frac{1-y}{1-x}]+[\frac{1-y^{-1}}{1-x^{-1}}], \quad x,y\in k\setminus\{0,1\}.$$

We define $\delta:\PB(k)\to k^*\tilde{\otimes}k^*$ by $\delta([z])=z\otimes(1-z)$ where $k^*\tilde{\otimes}k^*=k^*\otimes_\Z k^*/(x\otimes y+y\otimes x)$. The Bloch group is defined by $\B(k)=\ker \delta$. It fits in the following exact sequence
$$\xymatrix{
0\ar[r]& \B(k)\ar[r]&\PB(k)\ar[r]^\delta & k^*\tilde\otimes k^*\ar[r]&K_2(k)\ar[r]&0
}$$
We introduce the last term only for completeness: it comes from the presentation of $K_2(k)$ given by Matsumoto's theorem. There is a morphism of complexes 
$$\Phi:
\begin{cases}
C_*(G)\to C_*(\P^1(k))\\
[g_1|\cdots|g_k]\mapsto [\infty,g_1\infty,\ldots,g_1\cdots g_k\infty]
\end{cases}$$
If we denote by $B\subset G$ the Borel subgroup stabilizing $\infty$, it defines a subcomplex $C_*(B)\subset C_*(G)$ on which $\Phi$ vanishes. 

We get in this way an induced map $\Phi:H_3(G,B)\to \PB(k)$ where $H_*(G,B)$ denotes the homology of the quotient complex $C_*(G)/C_*(B)$. By definition, the Bloch invariant of a pair $(\overline{M},\rho)$ where $\overline{M}$ is a 3-manifold with toric boundary and $\rho:\pi_1(\overline{M})\to G$ maps the boundary into $B$ is 
$$\beta(\overline{M},\rho)=\Phi(f_*([\overline{M},\partial\overline{M}]))$$
where $f:(\overline{M},\partial\overline{M})\to (\mathrm{B}G,\mathrm{B}B)$ induces $\rho$. 

We will use the following lemma whose proof can be found in \cite{NeumannYang}. 

\begin{lemma}
Suppose that $\rho:\pi_1(\overline{M})\to \mathrm{PSL}_2(k)$ is such that $\rho(m)=\pm\begin{pmatrix} a & * \\ 0& a^{-1}\end{pmatrix}$ and $\rho(l)=\pm\begin{pmatrix} b & * \\ 0& b^{-1}\end{pmatrix}$. Then $\delta \beta(\overline{M},\rho)=a\tilde\otimes b$ up to $2$-torsion.
\end{lemma}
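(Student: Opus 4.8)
The strategy is to compute $\delta\beta(\overline M,\rho)$ directly from a cycle representative of $[\overline M,\partial\overline M]$ in $C_3(G)/C_3(B)$ and chase it through the maps $\Phi$ and $\delta$. First I would recall that, by Poincar\'e--Lefschetz duality, $[\overline M,\partial\overline M]$ generates $H_3(G,B)\cong H_3(\overline M,\partial\overline M)\cong \Z$, and that its image under the connecting map $\partial: H_3(G,B)\to H_2(B)$ is the fundamental class of the boundary torus $T$, i.e.\ the class of the $2$-cycle $[m|l]-[l|m]$ in $C_2(B)$ (up to sign), where $m,l$ are the images under $\rho$ of a meridian and longitude. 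This is exactly the content of Propositions in Section \ref{Fox}: a chain $z\in C_3(G)$ with $\partial z = [l|u-1]-[u|l-1]$ pushed forward by $\rho$ gives a relative cycle representing $[\overline M,\partial\overline M]$.

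Next I would track $\delta\Phi$ on the level of chains. Since $\delta([w])=w\,\tilde\otimes\,(1-w)$ and cross-ratios are multiplicative in an appropriate sense, one has a commutative diagram relating $\delta\circ\Phi: C_3(G)/C_3(B)\to k^*\tilde\otimes k^*$ to a more elementary map on $C_2$. Concretely, the composite $\delta\circ\Phi$ extends to a chain map and there is a map $\psi: C_2(\P^1(k))\to k^*\tilde\otimes k^*$, roughly $[z_0,z_1,z_2]\mapsto (z_0-z_1)\tilde\otimes(z_1-z_2)+\cdots$ (the standard "boundary of the pre-Bloch relation"), such that $\delta\Phi(c)=\psi(\partial' \Phi(c))$ for a suitable sign-twisted boundary, and $\psi$ vanishes on degenerate simplices and on the image of $C_2(B)$ except for an explicit term recording the "top-left entries" of the Borel matrices. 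Evaluating this on the boundary torus cycle $[m|l]-[l|m]$, and using that for a parabolic-ish upper triangular matrix $\begin{pmatrix} a & * \\ 0 & a^{-1}\end{pmatrix}$ the relevant contribution to $\psi$ is $a\,\tilde\otimes\,b$, yields $\delta\beta = a\,\tilde\otimes\,b$. The $2$-torsion ambiguity enters precisely because one works with $\mathrm{PSL}_2$ rather than $\mathrm{SL}_2$, so $a$ and $b$ are only defined up to sign, and $a\,\tilde\otimes\,b$ is only well-defined modulo elements of the form $(-1)\,\tilde\otimes\,c$, which are $2$-torsion in $k^*\tilde\otimes k^*$.

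The main obstacle I anticipate is making the chain-level identity $\delta\Phi = \psi\circ(\text{boundary})$ precise and keeping track of all signs and of what exactly happens on the subcomplex $C_*(B)$: $\Phi$ itself kills $C_*(B)$, but the auxiliary map $\psi$ used to compute $\delta$ does not, and it is the residual $B$-contribution that produces the answer $a\,\tilde\otimes\,b$. One has to argue that this residual contribution depends only on the homology class of the boundary torus in $H_2(B)$ and not on the choice of relative cycle $z$, which is where the exact sequence for $\PB(k)$ and the fact that $\delta$ is well-defined on $\PB(k)$ get used. Once that bookkeeping is in place, the computation reduces to the single-matrix statement about upper triangular elements, which is immediate. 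Since the excerpt says the proof "can be found in \cite{NeumannYang}", I would simply cite that reference and indicate that the computation above is the one carried out there, perhaps after remarking that our explicit cycle $z$ from Proposition \ref{calcul} makes the input to their machine completely concrete in the two-bridge case.
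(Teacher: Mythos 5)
Your proposal matches the paper, which gives no argument of its own for this lemma and simply defers to \cite{NeumannYang}; your decision to cite that reference (after noting that the explicit relative cycle $z$ from Proposition \ref{calcul} supplies the concrete input) is exactly what the text does. Your sketch of the underlying mechanism --- a relative cycle for $[\overline{M},\partial\overline{M}]$ whose boundary is the torus class $[m|l]-[l|m]$, with $\delta\circ\Phi$ controlled by a $C_2$-level map so that only the Borel eigenvalue data $a\,\tilde\otimes\,b$ survives, the sign ambiguity from $\mathrm{PSL}_2$ accounting for the $2$-torsion --- is a reasonable outline of the computation carried out in the cited literature, so no gap relative to the paper.
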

In particular, if $\rho$ is parabolic on the boundary, then $\beta(\overline{M},\rho)\in \B(k)$ modulo torsion. 





\section{An explicit formula in the Bloch group}\label{App}

We apply the construction of the preceding section to the case of two-bridge knot complements. 
Let us identify the generators $u,v$ of $G=\pi_1(S^3\setminus K(p,q))$ with their image in SL$_2(\C)$ by the holonomy representation. As $u$ and $v$ are meridians, their image are parabolic hence up to conjugation, one can find $x\in \C$ such that 

$$u=\begin{pmatrix} 1 & x \\ 0 & 1\end{pmatrix}\textrm{ and }v=\begin{pmatrix} 1 & 0 \\ x & 1\end{pmatrix}$$

Given $z\in \P^1(\C)$ represented by $(z,1)\in \C^2$, we find that $u(z)=z+x$ and $v(z)=\frac{1}{x+\frac{1}{z}}$. As $wu=vw$, we have $w(\infty)=v(w(\infty))$. As $0$ is the unique fixed point of $v$, we get $w(\infty)=0$ or 
$$\epsilon_1 x+
\cfrac{1}
{\epsilon_2 x+\cfrac{1}{\cdots+\cfrac{1}{\epsilon_{p-1}x}}
}=0.$$

Converesely, if $x$ satisfies the above equation and $k\subset \C$ is the number field generated by $x$, then $u,v\in \mathrm{SL}_2(k)$ satisfy the defining equation $wu=vw$. 

Applying the formula of Proposition \ref{calcul}, we get a formula for the Bloch invariant of $S^3\setminus K(p,q)$ endowed with the above representation. For short, we denote this element by $\beta_{p,q}\in \B(k)$. 

As $[z_0,z_1,z_2,z_3]=0$ if two of the $z_i$'s coincide, there are a lot of simplifications in this formula: each term of the form $[g_1|g_2|g_3]$ is sent to $[\infty\deuxp g_1\infty\deuxp g_1g_2\infty\deuxp g_1g_2g_3\infty]$ so that if any of $g_1,g_2,g_3$ fixes $\infty$, the term vanishes. We finally get only 
$$\beta_{p,q}=\Phi([g-w^*|(1-v)\frac{\partial w}{\partial v}|v]).$$ 

Derivating $\frac{\partial w}{\partial v}$ we get $(p-1)/2$ terms corresponding to the term $v^{\epsilon_{2j}}$ for $j=1,\ldots,(p-1)/2$. If $\epsilon_{2j}=1$, this term is $u^{\epsilon_1}v^{\epsilon_2}\cdots u^{\epsilon_{2j-1}}$ and if $\epsilon_{2j}=-1$, 
it gives $-u^{\epsilon_1}v^{\epsilon_2}\cdots u^{\epsilon_{2j-1}}v^{-1}$. Using the formula $[z_0\deuxp z_1\deuxp z_2 \deuxp z_3]=-[z_0\deuxp z_1\deuxp z_3\deuxp z_2]$ and expanding, we get 

\begin{flalign*}
\beta_{p,q}=&\sum_{j}[\infty, g\infty, gu^{\epsilon_1}\cdots u^{\epsilon_{2j-1}}\infty,gu^{\epsilon_1}\cdots u^{\epsilon_{2j-1}}v^{\epsilon_{2j}}\infty]\\
&-\sum_{j}[\infty, g\infty, gvu^{\epsilon_1}\cdots u^{\epsilon_{2j-1}}\infty,gvu^{\epsilon_1}\cdots u^{\epsilon_{2j-1}}v^{\epsilon_{2j}}\infty]\\
&-\sum_{j}[\infty, w^*\infty, w^*u^{\epsilon_1}\cdots u^{\epsilon_{2j-1}}\infty,w^*u^{\epsilon_1}\cdots u^{\epsilon_{2j-1}}v^{\epsilon_{2j}}\infty]\\
&+\sum_{j}[\infty, w^*\infty, w^*vu^{\epsilon_1}\cdots u^{\epsilon_{2j-1}}\infty,w^*vu^{\epsilon_1}\cdots u^{\epsilon_{2j-1}}v^{\epsilon_{2j}}\infty]
\end{flalign*}

To simplify this formula, we can apply $g^{-1}, (gv)^{-1},(w^*)^{-1},(w^*v)^{-1}$ to all terms in each of the respective lines. We also observe that $l=w^*w$ fixes $\infty$ so that $w^*(w(\infty))=w^*(0)=\infty$ hence $(w^*)^{-1}(\infty)=0$. 

Write $z_0=\infty$ and for $j\ge 1$
$$z_j=\epsilon_1 x+
\cfrac{1}
{\epsilon_2 x+\cfrac{1}{\cdots+\cfrac{1}{\epsilon_{j}x}}
}\textrm { so that }u^{\epsilon_1}v^{\epsilon_2}\cdots v^{\epsilon_{2j}}\infty=z_{2j}.$$

Recall that $g=u^{\epsilon_0}v^{\epsilon_1}\cdots v^{\epsilon_k}$ and $\epsilon_{\ell-i}=\epsilon_{-i}=-\epsilon_i$ provided that $i$ is not divisible by $p$. 
If $\ell<p$, we get $g=u^{-\epsilon_\ell}v^{-\epsilon_{\ell-1}}\cdots v^{-\epsilon_2}u^{-\epsilon_1}v^{-1}$ hence 
$$g^{-1}\infty=vu^{\epsilon_1}v^{\epsilon_2}\cdots u^{\epsilon_\ell}\infty=v(z_{\ell-1}).$$ 

If $\ell>p$, recall from Lemma \ref{lemmefonda} that $u^{\epsilon_0}v^{\epsilon_1}\cdots v^{\epsilon_{2p-1}}=1$ 
which implies $g^{-1}=u^{\epsilon_1}v^{\epsilon_2}\cdots v^{\epsilon_{2p-1-\ell}}$. This finally gives $g^{-1}(\infty)=z_{2p-1-\ell}$.

A nice way to synchronize these two formulas is to notice that we also have $u^{\epsilon_1}\cdots v^{\epsilon_{2p}}=1$. Hence for even $n<p$: $z_{2p-n}=u^{\epsilon_1}\cdots v^{\epsilon_{2p-n}}\infty=(u^{\epsilon_{2p-n+1}}\cdots v^{\epsilon_{2p}})^{-1}\infty=v^{-1}u^{\epsilon_1}\cdots u^{\epsilon_{n-1}}\infty=v^{-1}(z_{n-2})$. 
This shows that the formula for $k>p$ is the same as for $k<p$ and moreover, we get $z_{2p-2}=v^{-1}(\infty)$. Hence we can write in any case:
\begin{equation*}
    \begin{split}
\beta_{p,q}=\sum_{j}\Big([v(z_{\ell-1})\deuxp \infty\deuxp z_{2j-2}\deuxp z_{2j}]-[z_{\ell-1}\deuxp z_{2p-2}\deuxp z_{2j-2}\deuxp z_{2j}]-[0\deuxp \infty\deuxp z_{2j-2}\deuxp z_{2j}]\\
+[0\deuxp z_{2p-2}\deuxp z_{2j-2}\deuxp z_{2j}]\Big)
\end{split}
\end{equation*}
which can be reordered as follows:

\begin{equation*}
\begin{split}
\beta_{p,q}=\sum_{j}\Big([v(z_{\ell-1})\deuxp \infty\deuxp z_{2j-2}\deuxp z_{2j}]+[\infty\deuxp 0\deuxp z_{2j-2}\deuxp z_{2j}]
+[0\deuxp z_{2p-2}\deuxp z_{2j-2}\deuxp z_{2j}]\\
+[z_{2p-2}\deuxp z_{\ell-1}\deuxp z_{2j-2}\deuxp z_{2j}]\Big)
\end{split}
\end{equation*}
Computing explicitly the cross-ratios, we get the formula of Theorem \ref{principal}.

\begin{remark}
Consider the cases $p=2n+1,q=1$ so that $\ell=2p-1$ and $z_{\ell-1}=z_{2p-2}$. We observe that $\epsilon_1=\cdots=\epsilon_{p-1}=1$ so that the family of polynomials $P_j$ is a variant of the Chebyshev polynomial. In particular, its roots are $2i\cos(\pi j/p)$ for $j=1,\ldots,p-1$. In the last formula for $\beta_{p,q}$, all elements belong to the circle of imaginary numbers, so that their cross ratio is real. This shows that the volume of these knots vanishes, as expected. One can observe that their Bloch invariant also vanishes, as they belong to a totally real number field whose Bloch group vanishes thanks to Borel's theorem, see \cite{NeumannYang}.  
\end{remark}


\section{A formula for $\zeta_k(2)$?}

This paragraph is of more speculative nature and asks whether one can compute explicitely $\zeta_k(2)$ where $k$ is the trace field of $K(p,q)$ and $\zeta_k$ is the Dedekind zeta function. For arithmetic varieties, the volume is known to be proportional to $\zeta_k(2)$ where the proportionality factor depends on arithmetic invariants, see \cite[Chapter 11]{MR}.
Among two-bridge knots, only the figure-eight knot $K(5,3)$ is arithmetic, yielding the well-known formula 
$$ V(5,3)=6\Lambda(\pi/3)=3D(j)=\frac{9\sqrt{3}}{\pi^2}\zeta_{\Q(j)}(2)$$
where $\Lambda$ is the Lobatchevsky function and $j=\exp(2i\pi/3)$. 

In this case, one has $P_{p-1}=x^4-x^2+1$, $z_0=\infty,z_2=x-1/x=i,z_4=0$, $\ell=3$. All terms in the formula vanish except two terms: $[v(z_2):\infty:z_2:z_4]=[(x-i)^{-1}:\infty:i:0]=[1-x^2]=[-j^2]$ and 
$[0:z_8:z_0:z_2]=[0:-1/x:\infty:i]=[x^{-2}]=[-j^2]$. Applying $D$ and summing the terms yield the formula for $V(5,3)$.

A more subtle example is given by the knot $K(7,3)$ (the knot $5.2$ in Rolfsen's table) whose volume is approximately $2,82812208833078$ and three times the volume of the Weeks manifold, hence directly related to $\zeta_k(2)$ where $k=\Q[t]/(t^3-t-1)$. Precisely we have for $t$ the complex root of $t^3-t-1$ with positive imaginary part

$$V(7,3)=6D(t)=\frac{9(23)^{3/2}}{4\pi^4}\zeta_k(2).$$

We cannot expect that such formulas still hold for bigger values of $p$, however it is known that $\zeta_k(2)$ can be expressed with products of values of $D$ on algebraic numbers, see \cite{Zagier}.

Numerical experiments show that the trace fields of $K(p,q)$ tend to have class number equal to one and a lot of complex embeddings (in the case $q>1$). For instance in \cite{moi}, the author proved that the number of real embeddings of $k$ is more than $\frac{1}{2}|\sum_{j=1}^{p-1}\epsilon_j|$. Let $\mathcal{O}$ be the ring of integers of $k$: the number $\zeta_k(2)$ is related to the volume of the quotient $(\H^2)^{r_1}\times (\H^3)^{r_2}/\SL_2(\mathcal{O})$ where $d=r_1+2r_2$ is the dimension of $k$ over $\Q$. The cusp corresponding to $\infty$ has stabilizer equal to the Borel subgroup $B\subset \SL_2(\mathcal{O})$, itself commensurable with the semi-direct product $\mathcal{O} \rtimes \mathcal{O}^\times$. This means that, provided that the class number is equal to 1, the corresponding fundamental class of the cusp in  $H_{d}(B,\Z)$ vanishes in $H_d(\SL_2(\mathcal{O}),\Z)$. Finding an explicit $(d+1)$-chain bounding it would yield a formula for $\zeta_k(2)$ in the spirit of the present article. Of course, our techniques should be adapted as they are available only in low dimensions.

\end{document}